\newtheorem{theorem}{Theorem}
\newenvironment{proof}[1][Proof]{\noindent\textbf{#1.} }{\ \rule{0.5em}{0.5em}}
\let\footnote=\endnote
\begin{document}

\title{Identification and well-posedness in a class of nonparametric problems%
}
\author{Victoria Zinde-Walsh\thanks{%
The support of the Social Sciences and Humanities Research Council of Canada
(SSHRC) and the \textit{Fonds\ qu\'{e}becois de la recherche sur la soci\'{e}%
t\'{e} et la culture} (FRQSC) is gratefully acknowledged. } \\
\\
McGill University and CIREQ\\
victoria.zinde-walsh@mcgill.ca\\
(514) 398 4834}
\maketitle
\date{}

\begin{center}
\bigskip \pagebreak

{\LARGE Abstract}
\end{center}

This is a companion note to Zinde-Walsh (2010) to clarify and extend results
on identification in a number of problems that lead to a system of
convolution equations. Examples include identification of the distribution
of mismeasured variables, of a nonparametric regression function under
Berkson type measurement error, some nonparametric panel data models, etc.
The reason that identification in different problems can be considered in
one approach is that they lead to the same system of convolution equations;
moreover the solution can be given under more general assumptions than those
usually considered, by examining these equations in spaces of generalized
functions. An important issue that did not receive sufficient attention is
that of well-posedness. This note gives conditions under which
well-posedness obtains, an example that demonstrates that when
well-posedness does not hold functions that are far apart can give rise to
observable arbitrarily close functions and discusses misspecification and
estimation from the stand-point of well-posedness.\ 

\section{\protect\bigskip Introduction}

The results of this paper apply to a number of econometric problems,
including the examples below.

\textbf{Example 1.} The distribution of a mismeasured variable with another
observation.

See, e.g., reviews of Carroll, Rupert and Stefanski (1995); Chen, Hong and
Nekipelov (2009); the problem is examined in Cunha, Heckman and Schennach
(2010).

Suppose that $g$ is the density of a mismeasured variable, $x^{\ast },$ $z$
is observed and has density $w_{1}$; $z=x^{\ast }+u,$ where $u$ is
measurement/contamination error independent of $x^{\ast }$ with a density, $%
f.$ Another observation, $x,$ on $x^{\ast }$ is available: $x=x^{\ast
}+u_{x},$ where $u_{x}$ is not necessarily independent but $E(u_{x}|x^{\ast
},u)=0.$%
\begin{eqnarray}
x &=&x^{\ast }+u_{x};  \label{measer1} \\
z &=&x^{\ast }+u.  \label{measer2}
\end{eqnarray}

\textbf{Example 2.} Errors in variables regression (EIV) model with Berkson
type measurement error.

Review Chen, Hong and Nekipelov (2009); examined by Newey (2001), univariate
case in Schennach (2007) and Zinde-Walsh (2009), multivariate Zinde-Walsh
(2010).

Consider 
\begin{eqnarray}
y &=&g(x^{\ast })+u_{y};  \label{a} \\
x &=&x^{\ast }+u_{x};  \label{b} \\
z &=&x^{\ast }+u.  \label{c}
\end{eqnarray}%
Here (\ref{a})-(\ref{c}) provide a regression with $z$ representing a second
measurement or possibly a given projection onto a set of instruments for the
unobserved $x^{\ast }$. Here $y,z$ or $x,y,z$ are observed; $u$ is a Berkson
type\ measurement error independent of $z$; $u_{y},u_{x}$ have zero
conditional (on $z$ and the other errors) expectations. Denote $%
w_{1}=E(y|z), $ density of measurement error $f.$

\textbf{Example 3.} Panel data model with two periods.

Evdokimov (2010).

Here let $x$ (or $z)$ represent the observed variable in the first period,
and $z$ ($x)$ for the second, $x^{\ast }$ is the nonparametric function $%
m(X,\alpha ),$ where $\alpha $ is the idiosyncratic component and the
densities are conditional on the same value $X$ for the two periods; the
same distributional assumptions as in Example 1 are used.

The models lead to the same system of convolution equations. All vectors are
in $R^{d}$.

By independence in all cases we get 
\begin{equation*}
g\ast f=w_{1}.
\end{equation*}

For examples 1 and 3 define density of $z$ by $f_{z};$ by $x_{k}$ the $kth$
component of the vector $x$ and consider

\begin{equation*}
E(f_{z}x_{k}|z)=E(x_{k}^{\ast }f_{z}|z)=\int
(z-u)_{k}g(z-u)f(u)du=x_{k}g\ast f.
\end{equation*}%
Denote the observable $E(f_{z}x_{k}|z)$ by $w_{2k},$ $k=1,...d.$

For example 2 
\begin{equation*}
E(x_{k}y|z)=E(x_{k}^{\ast }g(x^{\ast })|z)=\int (z_{k}-u_{k})g(z-u)f(u)du.
\end{equation*}%
Denote here $E(x_{k}y|z)$ by $w_{2k},$ $k=1,...d.$

Thus for all the examples we need to solve the system of convolution
equations%
\begin{eqnarray}
g\ast f &=&w_{1};  \label{syst} \\
x_{k}g\ast f &=&w_{2k},k=1,...d.  \notag
\end{eqnarray}

It is advantageous to consider the functions as generalized functions. The
interest is often in distributions of the unobservables and there is no
reason to restrict those to be absolutely continuous; density may not
necessarily exist but can be represented as a generalized derivative of the
distribution function rather than an ordinary function. Since solving the
convolution equations is done via Fourier transforms restricting regression
functions in Example 2 to have ordinary Fourier transforms excludes binary
choice or polynomial regression and can be overcome by using generalized
functions. Also,\ if some variables have singular distributions, or if only
some variables are subject to measurement error, or there is a mass point in
the error distribution (e.g. in measurement error from surveys with some
portion of true responses) convoluting with a generalized $\delta -$function
is natural when considering the problems in spaces of generalized functions.

The spaces of generalized functions most relevant for solving these problems
are the space $S^{\prime }$ (tempered distributions); space $D^{\prime }$
and some related spaces also play a role in the proofs. See e.g.
Zinde-Walsh, 2010 for the definitions, discussion and summary of useful
properties.

The next section 2 presents the full solution to system of equations $\left( %
\ref{syst}\right) $ extending\ all the results in the current literature.

Section 3 discusses well-posedness. This is to clarify two issues: under
what conditions consistent estimation of the identified general model is
possible and in what sense does a possibly mis-specified parametric model
deliver valid analysis. The answer hinges on well-posedness of the
identification of the function $g.$ Well-posedness refers to $g$ depending
on the distributions of the observed variables in a continuous fashion.
Well-posedness does not hold if both the function $g$ and the density $f$
are supersmooth (that is their Fourier transforms decline exponentially); on
the other hand if any one of the two is such that the Fourier transform is
continuously differentiable and its inverse is a regular generalized
function (grows no faster than some power), then well-posedness in the weak
topology of generalized functions obtains; for well-posedness in stronger
topologies additional conditions need to be provided. An example shows that
a Gaussian density for both functions would lead to violation of
well-posedness. Classes of nonparametric models that include the Gaussian
and that lead to a well-posed problem are defined. Further, the issue of
regularizattion is discussed.

\section{The identification result}

\textbf{Assumption 1. }The generalized functions $g,f,w_{1}$ and $w_{2k},$ $%
k=1,...,d,$ are in the generalized function space $S^{\prime }$ and are
related by $\left( \ref{syst}\right) .$

Any generalized density functions are generalized derivatives of the
distribution function and belong to $S^{\prime },$ convolution equations are
defined. For a ordinary function, $b,$ e.g. a regression function of example
2 to belong to $S^{\prime }$ it is sufficient that it belong to some class
of functions on $R^{d},$ $\Phi (m,V)$ (with $m$ a vector of integers, $V$ a
positive constant) where $b\in \Phi (m,V)$ if 
\begin{equation}
\int \Pi \left( (1+t_{i}^{2})^{-1}\right) ^{m_{i}}\left\vert b(t)\right\vert
dt<V<\infty .\text{ }  \label{condb}
\end{equation}%
Thus if e.g. $b$ grows no faster than a polynomial, it is in $S^{\prime },$
so that the analysis here applies to binary choice and polynomial
regression. Convolutions with generalized functions from some classes are
defined for such functions (as discussed in Zinde-Walsh, 2010). For
conditional density of Example 3 some extra assumptions on the joint density
of the regressors are required.

Consider now Fourier transforms ($Ft):$ $\ \gamma =Ft(g);\phi
=Ft(f);\varepsilon _{.}=Ft(w_{.}).$

\textbf{Assumption 2}. Either $\phi $ or $\gamma $ is a continuous function
such that it satisfies $\left( \ref{condb}\right) .$

The continuity assumption on the characteristic function is typically made;
any characteristic function satisfies$\left( \ref{condb}\right) .$

By Theorem 1 of Zinde-Walsh 2010 then the following system of equations
holds in $S^{\prime }.$

\begin{eqnarray}
\gamma \cdot \phi &=&\varepsilon _{1};  \label{system} \\
\gamma _{k}^{\prime }\cdot \phi &=&\varepsilon _{2k},\text{ }k=1,...d. 
\notag
\end{eqnarray}

\textbf{Assumption 3}. supp($\phi )\supseteq $supp($\gamma )=W,$\ where $W$
is a convex set in $R^{d}$ that includes an interior point $0.$

The support assumption is necessary to solve for $\gamma .$ The interior
point in the case of characteristic functions is zero with the value of the
continuous characteristic function equal to 1 at that point. If the system
of equations involves functions with $W$ having an interior point $a\neq 0$
consider shifted functions.

\begin{theorem}
Under Assumptions 1-3 if

(a) $\gamma $ is continuously differentiable in $W,$ $\gamma (0)=c$%
\begin{equation}
\gamma (\zeta )=c\exp \int_{0}^{\zeta }\Sigma _{k=1}^{d}\varkappa _{k}(\xi
)d\xi _{k},  \label{sola}
\end{equation}%
with the uniquely defined continuous functions $\varkappa _{k}(\xi )$ that
solve 
\begin{equation*}
\varkappa _{k}(\xi )\varepsilon _{1}-i\varepsilon _{2k}=0,k=1,...,d;
\end{equation*}
or

(b) $\phi $ is continuously differentiable in $W$, $\phi (0)=c$%
\begin{equation}
\gamma (\zeta )=\tilde{\phi}(\zeta )^{-1}\varepsilon _{i}(\zeta ),
\label{solb}
\end{equation}%
where 
\begin{equation*}
\tilde{\phi}(\zeta )=\exp \int_{0}^{\zeta }\Sigma _{k=1}^{d}\tilde{\varkappa}%
_{k}(\xi )d\xi _{k},
\end{equation*}%
with the uniquely defined continuous functions $\varkappa _{k}(\xi )$ that
solve 
\begin{equation*}
\varepsilon _{1}\tilde{\varkappa}_{k}-(\left( \varepsilon _{1}\right)
_{k}^{\prime }-i\varepsilon _{2k})=0,k=1,...,d.
\end{equation*}
\end{theorem}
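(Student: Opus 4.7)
My plan is to apply Theorem~1 of Zinde-Walsh (2010) to Fourier-transform the convolution system \eqref{syst} in $S^{\prime}$ into the pointwise system \eqref{system}, and then in each case to (i) identify the logarithmic derivative of the relevant unknown from the data, and (ii) recover the unknown by line integration on the convex set $W$ using the Poincar\'{e} lemma.

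For case (a), since $\gamma \in C^{1}(W)$ and $\gamma(0)=c\neq 0$, there is a neighborhood of the origin in $W$ on which $\gamma$ does not vanish, and I may set $\varkappa_{k} := \gamma_{k}^{\prime}/\gamma$. Combining the two equations of \eqref{system} (multiplying the second by $\gamma$ and eliminating $\gamma \phi$ via the first) yields the relation $\varkappa_{k}\varepsilon_{1}-i\varepsilon_{2k}=0$ stated in the theorem, the sign of $i$ being inherited from the Fourier convention fixed in the passage to \eqref{system}. This relation determines $\varkappa_{k}$ uniquely as a continuous function on the subset of $W$ where $\varepsilon_{1}$ does not vanish, and by Assumption 3 it extends continuously to all of $W$. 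Because on the neighborhood of $0$ we have $\varkappa_{k}=\partial \ln \gamma/\partial \xi_{k}$, the 1-form $\sum_{k}\varkappa_{k}d\xi_{k}$ is closed; convexity of $W$ then supplies path-independence via the Poincar\'{e} lemma, and the exponential of the line integral together with the initial condition $\gamma(0)=c$ produces \eqref{sola}, uniquely.

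For case (b), I would differentiate the first equation of \eqref{system} with respect to $\xi_{k}$ to obtain $\gamma_{k}^{\prime}\phi+\gamma \phi_{k}^{\prime}=(\varepsilon_{1})_{k}^{\prime}$ and substitute the second equation to eliminate $\gamma_{k}^{\prime}\phi$ (up to the same $i$-convention), producing $\gamma \phi_{k}^{\prime}=(\varepsilon_{1})_{k}^{\prime}-i\varepsilon_{2k}$. Dividing by $\gamma \phi=\varepsilon_{1}$ on the neighborhood of $0$ where neither factor vanishes gives $\phi_{k}^{\prime}/\phi=\tilde{\varkappa}_{k}$, and the stated equation determines $\tilde{\varkappa}_{k}$ uniquely and continuously on $W$ in the same fashion as in case (a). The Poincar\'{e}-lemma construction then recovers $\tilde{\phi}$ from $\tilde{\varkappa}_{k}$ with $\tilde{\phi}(0)=c$, and $\gamma=\varepsilon_{1}/\tilde{\phi}$ from the first equation yields \eqref{solb}.

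The main technical obstacle in both cases is the passage from the pointwise identities valid only where the denominators $\gamma$, $\phi$, or $\varepsilon_{1}$ are nonzero to identities holding on all of $W$. Assumption 3 and the fact that the characteristic function equals $c$ at $0$ guarantee a neighborhood of $0$ on which the logarithmic-derivative construction is immediate; the continuity of $\varkappa_{k}$ (resp.\ $\tilde{\varkappa}_{k}$) asserted in the theorem statement provides the unique extension to $W$, and the closedness of the resulting 1-form, automatic because a $C^{1}$ solution $\gamma$ (resp.\ $\phi$) is assumed to exist, together with convexity of $W$ makes the line integral single-valued. The resulting $\gamma$ lies in $S^{\prime}$ because $\varepsilon_{1}$ inherits \eqref{condb} from the underlying data within the framework of Zinde-Walsh (2010).
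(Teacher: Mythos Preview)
Your proposal is correct and follows the same line of argument that the paper invokes: the paper does not give a self-contained proof here but simply cites Theorem~3 and its Corollary in Zinde-Walsh (2010), whose method is exactly the logarithmic-derivative identification followed by line integration on the convex support $W$ that you describe. Your framing via the Poincar\'e lemma for the closed 1-form $\sum_k \varkappa_k\,d\xi_k$ is a clean way to articulate the path-independence behind the formulas \eqref{sola} and \eqref{solb}, and your handling of the extension from a neighborhood of the origin to all of $W$ correctly isolates the only genuinely delicate step.
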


The proof is in proof of Theorem 3 and the Corollary of Zinde-Walsh, 2010.
If support of $\phi $ coincides with support of $\gamma ,$ $\phi $ (and thus
the function $f)$ is identified.

The proof is set in the space $S^{\prime }$ of generalized functions and
does not rely on existence of densities. The proof of (b) in the univariate
case was first provided in Zinde-Walsh 2009, correcting the result of
Schennach 2007. The formula for the density in Cunha et al. 2010 is valid in
case (a) and can be interpreted in terms of generalized functions
("distributions"). In case (b), though, a different solution is given here.
Thus identification requires differentiability of either $\gamma $ or $\phi $%
; when $\gamma $ is not differentiable and the result in Cunha et al does
not hold identification is still possible in case (b). This also extends the
identification result of Evdokimov 2010.

\section{Well-posedness}

We now consider whether when the distributions of the observables are close
the unknown functions are also necessarily close.

A sufficient condition is provided in Zinde-Walsh 2010 (Theorem 4). When
identification is based on (b) of Theorem 1 here the model class\ needs to
be restricted to include only measurement error distributions with $\phi
^{-1}$ in $\Phi (m,V)$ for some $m,V.$ Equivalently, when identification is
based on (a), the sufficient condition is for the class of models to be
restricted to those where the latent factor distribution is such that $%
\gamma ^{-1}\in \Phi (m,V)$.

These conditions exclude models where both $g$ and $f$ are supersmooth with
supp($\gamma )$ unbounded leading to a supersmooth distribution for $w_{1}.$
Although these conditions are only shown to be sufficient, an example below
(from Zinde-Walsh 2009 and 2010) demonstrates that a Gaussian distribution
(that violates these conditions) fails well-posedness in the weak topology
of generalized functions in $S^{\prime }$ and therefore in any stronger
topology or metric (uniform, $L_{1},etc.).$

\textbf{Example 4.} \textit{Consider the function }$\phi (x)=e^{-x^{2}},$%
\textit{\ }$x\in R.$\textit{\ Consider in }$S$\textit{\ the function }$%
b_{n}(x)=$

\begin{equation}
\left\{ 
\begin{array}{cc}
e^{-n} & \text{if }n-\frac{1}{n}<x<n+\frac{1}{n}; \\ 
0<b_{n}(x)\leq e^{-n} & \text{if }n-\frac{2}{n}<x<n+\frac{2}{n}; \\ 
0 & \text{otherwise.}%
\end{array}%
\right.  \label{bn}
\end{equation}%
\textit{This }$b_{n}(x)$\textit{\ converges to }$b(x)\equiv 0$\textit{\ in }$%
S^{\prime }$\textit{. Indeed for any }$\psi \in S$\textit{\ }%
\begin{equation*}
\int_{-\infty }^{\infty }b_{n}(x)\psi (x)dx=\int_{n-2/n}^{n+2/n}b_{n}(x)\psi
(x)dx\rightarrow 0.
\end{equation*}

\textit{Now consider }$\varepsilon _{n}=\allowbreak \varepsilon
+b_{n}\rightarrow \varepsilon .$\textit{\ We show that }$\varepsilon
_{n}\phi ^{-1}$\textit{\ does not converge in }$S^{\prime }$\textit{\ to }$%
\varepsilon \phi ^{-1}.$\textit{\ Such convergence would imply that (}$%
\varepsilon _{n}-\varepsilon )\phi ^{-1}=b_{n}\phi ^{-1}\rightarrow 0$%
\textit{\ in }$S^{\prime }.$

\textit{But the sequence }$b_{n}(x)\phi (x)^{-1}$\textit{\ does not
converge. Indeed if it did then }$\int b_{n}(x)\phi ^{-1}(x)\psi (x)dx$%
\textit{\ would converge for any }$\psi \in S.$\textit{\ But for }$\psi \in
S $\textit{\ such that }$\psi (x)=\exp (-\left\vert x\right\vert )$\textit{\ 
}%
\begin{eqnarray*}
\int b_{n}(x)e^{x^{2}}\psi (x)dx &\geq
&\int_{n-2/n}^{n+2/n}b_{n}(x)e^{x^{2}}\psi (x)dx\geq
e^{-n}\int_{n-1/n}^{n+1/n}e^{x^{2}-x}dx \\
&\geq &\frac{2}{n}e^{-2n+\left( n-1/n\right) ^{2}}.
\end{eqnarray*}%
\textit{This diverges}.$\blacksquare $

Thus, e.g. for the Gaussian distribution there are models with unknown
functions that are far from each other in $S^{\prime }$, but that lead to
observable functions that are arbitrarily close.

When the nonparametric identification result is interpreted to support
possible wider applicability, when estimation is in fact based on a
parametric model the question arises as to which nonparametric models are
close to a model misspecified as parametric. This question may be posed e.g.
for the analysis of Cunha et al 2010 who use Gaussian and mixed Gaussian
distributions in estimation. Is there some meaningful nonparametric class
that includes the Gaussian where observationally close models imply
closeness of latent factors?

Define a class of generalized functions $\Phi (B,\Lambda ,m,V)\subset
S^{\prime }$ for some positive constant $B$ and matrix $\Lambda $; a
generalized function $b\in \Phi (B,\Lambda ,m,V)$ if there exists a function 
$\bar{b}(\zeta )\in \Phi (m,V)$ with support in $\left\Vert \zeta
\right\Vert >B$ such that also $\bar{b}(\zeta )^{-1}\in \Phi (m,V)$ and $%
b\cdot I(\left\Vert \zeta \right\Vert >B)=\bar{b}(\zeta )\exp \left( -\zeta
^{\prime }\Lambda \zeta \right) .$ Note that a linear combination of
functions in $\Phi (B,\Lambda ,m,V)$ belongs to the same class. For a
sequence of $b_{n}\in \Phi (B,\Lambda ,m,V)$ to converge to zero as
generalized functions it is necessary that the corresponding $\bar{b}_{n}$
converge to zero (a.e.).

\textbf{Assumption 4.} $\gamma \in \Phi (B,\Lambda _{\gamma },m,V);$ $\phi
\in \Phi (B,\Lambda _{\phi },m,V).$

If this assumption holds $\varepsilon _{1}=\gamma \cdot \phi \in \Phi
(B,\Lambda _{\gamma }+\Lambda _{\phi },2m,V^{2})$ and $\varepsilon
_{2}=\gamma _{k}^{\prime }\cdot \phi $ also is in $\Phi (B,\Lambda _{\gamma
}+\Lambda _{\phi },2m,V^{2}).$

\begin{theorem}
Under conditions of Theorem 1 and the Assumption 4 applying to generalized
functions $\varepsilon _{i,n},$ $i=1,2$ if $\varepsilon _{i,n}\rightarrow
\varepsilon _{i}$ in $S^{\prime },$ then the corresponding solutions $\gamma
_{n}$ given by (\ref{sola}) or (\ref{solb}) converge to $\gamma $ in $%
S^{\prime }.$
\end{theorem}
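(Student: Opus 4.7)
The plan is to feed the convergence $\varepsilon_{i,n}\to\varepsilon_{i}$ through the explicit solution formulas \eqref{sola}--\eqref{solb} of Theorem 1, using the structure imposed by Assumption 4 to bypass the obstruction illustrated in Example 4. The crucial feature of the class $\Phi(B,\Lambda,m,V)$ is that on $\Vert\zeta\Vert>B$ each of $\gamma_{n}$, $\phi_{n}$, and the products $\varepsilon_{i,n}$ factors as a fixed Gaussian times a function whose modulus and reciprocal both lie in $\Phi(m,V)$. Consequently, although the recipe of Theorem 1 appears to involve division by $\phi$ (whose reciprocal grows like $\exp(\zeta^{\prime}\Lambda_{\phi}\zeta)$), the Gaussian factors cancel exactly with those of $\varepsilon_{1,n}$, so the inversion step that delivers $\gamma_{n}$ only ever inverts the polynomially bounded $\bar{\phi}_{n}$ or $\bar{\varepsilon}_{1,n}$ --- never the dangerous exponential that produced divergence in Example 4.

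I would split every test function $\psi\in S$ into a part supported on the compact ball $\Vert\zeta\Vert\leq B$ and a part supported on the tail. On the ball all relevant functions are continuous and $\varepsilon_{1,n}(0)\to\varepsilon_{1}(0)\neq 0$, so $\varepsilon_{1,n}$ is bounded away from zero on a uniform neighbourhood of the origin; continuity of division, of the path integral $\int_{0}^{\zeta}\sum\varkappa_{k,n}d\xi_{k}$, and of exponentiation then gives locally uniform convergence $\gamma_{n}\to\gamma$. On the tail I would invoke the necessary condition stated immediately after the definition of $\Phi(B,\Lambda,m,V)$: since $\varepsilon_{i,n}-\varepsilon_{i}\to 0$ in $S^{\prime}$ while lying in $\Phi(B,\Lambda_{\gamma}+\Lambda_{\phi},2m,V^{2})$ by closure under linear combinations, the corresponding tail factors $\bar{\varepsilon}_{i,n}-\bar{\varepsilon}_{i}$ converge to zero almost everywhere. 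Feeding this a.e.\ convergence through the recipe of Theorem 1 --- in which, by the Gaussian cancellation above, only the uniformly $\Phi(m,V)$-controlled factors get inverted --- yields a.e.\ convergence $\bar{\gamma}_{n}\to\bar{\gamma}$ under a uniform polynomial bound.

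The main obstacle is closing the loop from a.e.\ convergence of the $\bar{\gamma}_{n}$ back to $S^{\prime}$-convergence of the $\gamma_{n}$. This is where the Gaussian damping in Assumption 4 earns its keep: for any $\psi\in S$, the tail integrand $(\gamma_{n}-\gamma)\psi(\zeta)=(\bar{\gamma}_{n}-\bar{\gamma})\exp(-\zeta^{\prime}\Lambda_{\gamma}\zeta)\psi(\zeta)$ is dominated by a fixed integrable function, since $\bar{\gamma}_{n}-\bar{\gamma}$ grows at most polynomially (uniformly in $n$) and the Gaussian factor beats every polynomial. Dominated convergence then gives the tail contribution, and together with the compact-ball estimate yields $\langle\gamma_{n}-\gamma,\psi\rangle\to 0$ for every $\psi\in S$. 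A subsidiary point, most relevant in case (a), is verifying that the exponentiated path integral preserves these domination estimates on the tail; this reduces to uniform bounds on the $\varkappa_{k,n}$ that follow from the same argument applied one level down.
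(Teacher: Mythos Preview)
Your proposal is correct and rests on the same pivotal observation as the paper: under Assumption~4 the Gaussian tails of $\varepsilon_{1}$ and of $\eta_{n}=\varepsilon_{\cdot,n}-\varepsilon_{\cdot}$ cancel exactly, so that the dangerous inversion of a supersmooth function collapses to inversion of a polynomially controlled $\bar\varepsilon_{1}$. Where you and the paper part ways is in how the final convergence is extracted. The paper's argument is extremely compressed: it reduces everything to showing $\varepsilon_{1}^{-1}\eta_{\cdot,n}\to 0$ in $S^{\prime}$, notes the exponent cancellation gives $\varepsilon_{1}^{-1}\eta_{\cdot,n}=\bar\varepsilon_{1}^{-1}\bar\eta_{\cdot,n}$, asserts $\bar\eta_{\cdot,n}\to 0$ in $S^{\prime}$, and then invokes \emph{hypocontinuity} of the product (multiplication by the fixed $O_{M}$-function $\bar\varepsilon_{1}^{-1}$ is continuous on $S^{\prime}$) to conclude. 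You instead perform an explicit ball/tail decomposition of each test function, handle $\Vert\zeta\Vert\le B$ by locally uniform convergence, and on the tail pass from the a.e.\ convergence of $\bar\varepsilon_{i,n}$ (the stated necessary condition for the class) to a.e.\ convergence of $\bar\gamma_{n}$, closing with dominated convergence using the Gaussian weight $\exp(-\zeta^{\prime}\Lambda_{\gamma}\zeta)$ against the uniform $\Phi(m,V)$ bound. Your route is longer but more self-contained and makes visible exactly where the structure of Assumption~4 is spent; the paper's route is shorter but leans on the abstract distribution-theoretic fact of hypocontinuity and leaves the reduction ``from the nature of the identified solution it follows that $\varepsilon_{1}^{-1}\eta_{\cdot,n}\to 0$ suffices'' entirely to the reader. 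The ``subsidiary point'' you flag about the exponentiated path integral is genuinely needed in case~(a) and is not addressed in the paper's proof at all; your reading that Assumption~4 places $\gamma_{n}$ itself in $\Phi(B,\Lambda_{\gamma},m,V)$ uniformly is what makes the domination go through without having to control $\exp\!\int\varkappa_{k,n}$ directly.
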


\begin{proof}
From the conditions of the theorem $\eta _{i,n}=\varepsilon
_{i,n}-\varepsilon _{i}$ converges in $S^{\prime }$ to zero. From the nature
of the identified solution (\ref{sola}) or (\ref{solb}) it follows that if
it can be shown that $\varepsilon _{1}^{-1}\eta _{\cdot ,n}$ converges to
zero then the problem for the distribution of the latent factor is
well-posed. But this is indeed the case since the exponents cancel, $\bar{%
\eta}_{\cdot ,n}$ converges to zero in $S^{\prime }$ and convergence to zero
follows by hypocontinuity of the product $\bar{\varepsilon}_{1}^{-1}\bar{\eta%
}_{\cdot ,n}$.
\end{proof}

The values for the tail exponent have to be fixed implying that even a
slight deviation in the exponent violates the well-posedness condition:
there are no two different Gaussian distributions in the class. Since an
estimated Gaussian will differ from the true Gaussian they cannot belong to
the same non-parametric class thus there is separation between estimation in
the parametric Gaussian problem and\ in the fully general nonparametric
specification in $S^{\prime }.$

Consider a solution regularized with a weighting function. It is high
frequency components that cannot be identified in convolution with a super
smooth function and regularization smooths those out. Fix a function $\psi
\in D.$ Using this weight on the Fourier transform is equivalent to solving
convolution equations%
\begin{eqnarray}
g\ast f\ast (Ft^{-1}(\psi )) &=&w_{1}\ast (Ft^{-1}(\psi ));  \label{weight}
\\
x_{k}g\ast f\ast (Ft^{-1}(\psi )) &=&w_{2k}\ast (Ft^{-1}(\psi )),k=1,...d. 
\notag
\end{eqnarray}

As in the proof of Theorem 3 of Zinde-Walsh 2010 for any such $\psi $ the
solution exists because multiplication by a continuous function $\gamma
^{-1} $ or $\phi ^{-1}$ with arbitrary growth at infinity is permitted since
support of $\psi $ is bounded.

Schwatz (1964, pp.271-273) gives a characterization of functions in $%
S^{\prime }$ with Fourier transform that has bounded support (in a cube $%
\left\vert x_{k}\right\vert <C,$ $k=1,...,d)$ based on Wiener-Paley theorem.
Such a function is a continuous function $g$ that can be extended to a
entire analytic function $G$ of a complex argument and is of exponential
type $\leq 2\pi C,$ meaning%
\begin{equation*}
\underset{\left\vert z\right\vert \rightarrow \infty }{\lim }\sup \frac{\log
\left\vert G(z)\right\vert }{\left\vert z_{1}\right\vert +...+\left\vert
z_{d}\right\vert }\leq 2\pi C.
\end{equation*}%
Thus as long as $g$ is such a function it can be expressed via the
regularized solution. As in Schwartz the subspace of all functions of
exponential type (for any finite $C)$ can also be considered. However, the
regularized solutions may not come close to a true $g$ that does not belong
to this subspace.

\end{document}